\def\d{\delta}
\def\a{\alpha}
\def\l{\lambda}
\def\e{\epsilon}
\def\D{\Delta}
\def\G{\Gamma}
\def\L{\Lambda}
\def\I{{\bf I}}
\def\K{{\mathbb K}}
\def\C{\mathbb{C}}
\def\Z{\mathbb{Z}}
\def\gl{{\mathfrak{gl}_{n}}}
\def\Uq{{\rm{U}}_q{(\frak{gl}_{n})}}
\def\UW{({{U}_q}\!(\gl)^0_{ev})^W}
\def\sm{\!\setminus\!}
\def\gs{\geqslant}
\def\HH{{\mathcal H}}
\def\ZZ{{\mathcal{Z}}}
\def\cl{\centerline}
\def\QED{\hfill$\Box$\par}
\def\rar{\rightarrow}
\def\vs{\vspace*}
\numberwithin{equation}{section}
\newtheorem{theo}{Theorem}[section]
\newtheorem{lemm}{Lemma}[section]
\newtheorem{coro}{Corollary}[section]
\begin{document}

\title[The quantum Casimir operators of $\Uq$]
{The quantum Casimir operators of $\Uq$ and their
eigenvalues}

\author{Junbo Li}
\address{School of Mathematics and Statistics, University of Sydney, NSW
2006, Australia}

\begin{abstract}
We show that the quantum Casimir operators of the quantum linear
group constructed in early work of Bracken, Gould and Zhang together
with one extra central element generate the entire center of $\Uq$.
As a by product of the proof, we obtain intriguing new formulae for
eigenvalues of these quantum Casimir operators, which are expressed
in terms of the characters of a class of finite dimensional
irreducible representations of the classical general linear algebra.

\medskip

\noindent{\it Keywords:}\ \ Quantum groups, Harish-Chandra isomorphism, Center

\medskip

\noindent{\it{MR(2010) Subject Classification}: 17B10, 17B37}\vs{10pt}
\end{abstract}
\maketitle

\section{Introduction}

Quantum groups originated from the theory of soluble models of the
Yang-Baxter type in the middle of 80s. They have played important
roles in various branches of mathematics and physics, most notably
in two-dimensional soluble models in statistical mechanics and knot
theory. The study of their structure and representation theory has
been the focus of research in representation theory and continues to
attract much attention. In particular, the discovery of the crystal
basis and canonical basis \cite{K, L} is one of the most important
achievement in representation theory in recent years.

In the early 90s, a set of central elements of the quantum group
$\Uq$ was constructed in \cite{GZB, ZGB}. We shall refer to them as
the quantum Casimir operators of $\Uq$. The reason for this
terminology is the fact that these central elements of $\Uq$ are the
quantum analogues of the familiar Casimir operators of the universal
enveloping algebra ${\rm U}(\mathfrak{gl_n})$ of $\mathfrak{gl_n}$
given by $ \sum_{i_1=1}^n\cdots \sum_{i_k=1}^n E_{i_1 i_2} E_{i_2
i_3}\cdots E_{i_{k-1} i_k} E_{i_k i_1}$ $(k=1,2,\cdots)$, where
$E_{i j}$ are the images of the matrix units under the canonical
embedding of $\mathfrak{gl_n}$ in ${\rm U}(\mathfrak{gl_n})$. These
Casimir operators of ${\rm U}(\mathfrak{gl_n})$ play an important
role in the Interacting Boson Model in nuclear physics. Their
quantum analogues have also been applied in a similar way.

One obvious question was whether the quantum Casimir operators
\cite{GZB, ZGB} of $\Uq$  (supplemented with the obvious central
element $c$ given by \eqref{c-inv}) generated the entire center of
$\Uq$. The general expectation was that the answer should be
affirmative, but no proof was ever given as far as we know. The main
purpose of this paper is to give a rigorous proof. The result is
described in Theorem \ref{MT}.

The proof of Theorem \ref{MT} requires us to analyze the eigenvalues
of the quantum Casimir operators in highest weight representations
of $\Uq$. A formula for the eigenvalues was obtained in \cite{LZ}
(in fact \cite{LZ} treated ${\rm U}_q(\mathfrak{gl_{m|n}})$, which
included $\Uq$ as a special case). We cast the formula into a form
readily usable for our purpose. This new formula is expressed in
terms of the characters of a class of finite dimensional irreducible
representations of the classical general linear algebra. This result
is rather intriguing, and we believe that it is interesting in its
own right.

We should point out that the structure of the center of a quantum
group is much studied \cite{J, T} at an abstract level. In
particular, a quantum analogue of the celebrated Harish-Chandra
homomorphism in semi-simple Lie algebras has been established for
quantum groups at generic $q$. In the case of $\Uq$, a set of
generators different from the quantum Casimir operators of
\cite{GZB} was constructed in \cite{HM}.

\section{The quantum general linear group}
\subsection{The quantum general linear group}

This section provides some basic materials on the general linear
algebra $\gl$ and its quantum group $\Uq$. Let $\e_i$,  with $i\in\I
= \{1,2,\cdots,n\}$, be a basis of an Euclidean space with the inner
product $(\e_i, \e_j)=\delta_{i j}$. Set
$\rho=\frac{1}{2}\sum_{i=1}^n(n-2i+1)\e_i$.

The quantized universal enveloping algebra $\Uq$ of the general
linear Lie algebra $\gl$ is a unital associative algebra over
$\K:=\C(q)$ generated by $K_i^{\pm1}$, $E_{i'}$, $F_{i'}$ $(i\in \I,
i'\in \I':=\I\sm\{n\})$, subject to the following relations:
\begin{eqnarray*}
&&K_iK_i^{-1}=1,\ \ \ K_i^{\pm1} K_j^{\pm1}=K_j^{\pm1}K_i^{\pm1},\ \ \
K_iE_{j} K_i^{-1}=q^{(\epsilon_j-\e_{j+1}, \epsilon_i)}E_{j},\\
&&K_iF_{j} K_i^{-1}=q^{-(\epsilon_j-\e_{j+1}, \epsilon_i)}F_{j}, \ \ \
E_{r}E_{s}=E_{s}E_{r},\ \ F_{r}F_{s}=F_{s}F_{r},\quad |r-s|\gs2,\\
&&E_iF_j-F_jE_i=\d_{i,j}\frac{K_iK^{-1}_{i+1}-K_i^{-1}K_{i+1}}{q-q^{-1}},
\ \ \ S^{(+)}_{i, i\pm1}=S^{(-)}_{i, i\pm1}=0,
\end{eqnarray*}
and
\begin{eqnarray*}
S^{(+)}_{i,i\pm 1}\!\!\!&=&\!\!\!(E_{i})^2E_{i\pm 1}-(q+q^{-1})E_{i}E_{i\pm1}E_{i}+E_{i\pm1}(E_{i})^2,\\
S^{(-)}_{i, i\pm 1}\!\!\!&=&\!\!\!(F_{i})^2F_{i\pm1}-(q+q^{-1})F_{i}F_{i\pm 1}F_{i}+F_{i\pm 1}(F_{i})^2.
\end{eqnarray*}

As we know, $\Uq$ possesses the structure of a Hopf algebra with the
co-multiplication $\D$,  co-unit $\e$ and antipode $S$ respectively
given by
\begin{eqnarray*}
&&\Delta(E_{i})=E_{i}\otimes K_iK_{i+1}^{-1}+1\otimes E_{i},\ \ \ \Delta(F_{i})=F_{i}\otimes 1+K_i^{-1}K_{i+1}\otimes F_{i},\\
&&\Delta(K^{\pm1}_i)=K^{\pm1}_i\otimes K^{\pm1}_i,\ \ \ \epsilon(E_{i})=\epsilon(F_{i})=0,\ \ \epsilon(K^{\pm1}_i)=1,\\
&&S(E_{i})=-E_{i}K_i^{-1}K_{i+1},\ \ \ S(F_{i})=-K_iK_{i+1}^{-1}F_{i},\ \ \ S(K^{\pm1}_i) =K^{\mp1}_i.
\end{eqnarray*}

The natural module $V$ for $\Uq$ has the standard basis
$\{v_i\,|\,i\in\I\}$ such that
\begin{eqnarray*}
E_iv_j=\d_{j,i+1}v_i,\ \ \ F_{i}v_j=\d_{j,i}v_{i+1},\ \ \ K_iv_j=\big(1+(q-1)\d_{i,j}\big)v_j.
\end{eqnarray*}
Denote $\pi$ the $\Uq$-representation relative to this basis, then
$\pi(E_{i})=E_{i, i+1}$, $\pi(F_{i})=E_{i+1, i}$ and $\pi(K_{i})=I +
(q-1)E_{i i}$, where $E_{i j}$ are the matrices $(E_{i j})_{r
s}=\d_{i r}\d_{j s}$.

We now turn to the description of the center $\ZZ$ of the quantum
$\Uq$. Let $\Uq^-$, $\Uq^+$ and $\Uq^0$ the subalgebras of $\Uq$
generated by $F_{i'}$ ($i'\in I'$), $E_{i'}$ ($i'\in I'$) and
$K_i^\pm$ ($i\in I$) respectively. Any element $z\in\ZZ$ can be
written as
\[
z = z^{(0)}+\mbox{$\sum\limits_s u_s^{(-)}$}u_s^{(0)} u_s^{(+)},
\]
where $z^{(0)}, u_s^{(0)}\in \Uq^0$, $u_s^{(+)}\in\Uq^+$ and
$u_s^{(-)}\in \Uq^-$. The quantum Harish-Chandra homomorphism is an
algebra homomorphism $\varphi: \ZZ \longrightarrow \Uq^0$ such that
$z\mapsto z^{(0)}$.

The dot action of the Weyl group $W$ of $\gl$ on $\Uq^0$ is given by
permutations of the elements $q^{-i}K_i$  ($i\in{\bf I}$). Define
\begin{eqnarray}
L_i = q^{(\e_i,2\rho)}(K_i)^2, \quad i\in{\bf I}.
\end{eqnarray}
Then $L_i$ are permuted by the Weyl group. Let $\Uq^0_{ev}$ be  the
subalgebra spanned by the elements $\Pi_{i=1}^nL_i^{l_i}$ for
$l_i\in\Z$. We denote by $\UW$ the $W$-invariant subalgebra of
$\Uq^0_{ev}$. By using the quantum Harish-Chandra isomorphism for
$U_q(\frak{sl}_n)$ (see, e.g., \cite{J, T}), one can prove the
following result.
\begin{lemm}\label{lamma2--1}
The Harish-Chandra homomorphism is an algebra isomorphism between
the center $\ZZ$ of $\Uq$ and the subalgebra of $\Uq^0$ generated by
the elements of $\UW$ together with $c$, where
\begin{eqnarray}\label{c-inv}
c=K^{-1}_1 K^{-1}_2 \cdots K_n^{-1}.
\end{eqnarray}
\end{lemm}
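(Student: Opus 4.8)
The plan is to show that $\varphi$ is injective and that $\mathrm{Im}\,\varphi=\langle\UW,c\rangle$, the subalgebra generated by $\UW$ and $c$. Injectivity is standard and I would merely quote it (it follows from the triangular decomposition together with the faithfulness, at generic $q$, of the $\Uq$-action on the direct sum of all Verma modules: an element of $\ZZ$ in $\Ker\,\varphi$ annihilates every highest weight vector, hence every Verma module, hence is $0$). Before analysing the image I record two elementary points: $c$ is grouplike and commutes with each $E_i$ and $F_i$ because $(\e_j-\e_{j+1},\sum_k\e_k)=0$, so $c\in\ZZ$ and $\varphi(c)=c$; and $\sum_i(\e_i,2\rho)=0$ gives $L_1L_2\cdots L_n=c^{-2}$, whence, $\UW$ being the algebra of symmetric Laurent polynomials in the $L_i$, $\langle\UW,c\rangle=\K[e_1(\underline L),\dots,e_{n-1}(\underline L),c^{\pm1}]$, with $e_k(\underline L)$ the $k$-th elementary symmetric function.

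For the inclusion $\mathrm{Im}\,\varphi\supseteq\langle\UW,c\rangle$ it suffices, beyond $\varphi(c)=c$, to realise each $e_k(\underline L)$ as a Harish-Chandra image, and here I would invoke the central elements attached to finite dimensional representations. The braided structure on the category of finite dimensional $\Uq$-modules (available since the natural module already has all its weights in the $\Uq^0$-lattice $\Z^n$) produces, for each such module $M$, a central element $C_M$; a standard quantum-trace computation shows that $\varphi(C_M)$ is $W$-invariant, lies in $\Uq^0_{ev}$, and has support exactly $\{2\mu:\dim M_\mu\neq 0\}$, the coefficient of $L^\mu$ being a nonzero multiple of $\dim M_\mu$. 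Taking $M=\Lambda^kV$, whose weights $\e_{i_1}+\dots+\e_{i_k}$ all have multiplicity one and form a single $W$-orbit, then forces $\varphi(C_{\Lambda^kV})$ to be a scalar multiple of $e_k(\underline L)$, and the scalar is nonzero since $C_{\Lambda^kV}$ acts by a nonzero scalar on the trivial module. Together with $c$, the $C_{\Lambda^kV}$ ($1\le k\le n-1$) give $\mathrm{Im}\,\varphi\supseteq\langle\UW,c\rangle$.

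For the reverse inclusion, let $z\in\ZZ$ and write $\varphi(z)=\sum_\nu c_\nu K^\nu$. First, Weyl invariance: whenever $\langle\lambda+\rho,\a_i^\vee\rangle\in\Z_{>0}$ there is a nonzero homomorphism of Verma modules $M(s_i\cdot\lambda)\to M(\lambda)$, on the image of which $z$ acts by a single scalar; hence $\langle\varphi(z),\lambda\rangle=\langle\varphi(z),s_i\cdot\lambda\rangle$, and since this polynomial identity holds on a Zariski dense set, $\varphi(z)$ is fixed by the dot action of $W$ on $\Uq^0$. Second, a support restriction: $c_\nu=0$ unless $\nu$ lies in one of the two $W$-stable cosets of $2\Z^n$ in $\Z^n$, namely $2\Z^n$ and $(1,\dots,1)+2\Z^n$; granting this, the $2\Z^n$-part of $\varphi(z)$ lies in $\Uq^0_{ev}$ and, being dot-invariant, in $\UW$, while the remaining part is $c^{-1}$ times a dot-invariant element of $\Uq^0_{ev}$, i.e.\ an element of $c\UW$, so $\varphi(z)\in\langle\UW,c\rangle$ and the two inclusions finish the proof. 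To establish the support restriction I would use the factorisation $\Uq=U_q(\mathfrak{sl}_n)\langle K_n^{\pm1}\rangle$ (adjoining $K_n$ to $U_q(\mathfrak{sl}_n)=\langle E_i,F_i,K_iK_{i+1}^{-1}\rangle$ recovers $\Uq$, freely): in the resulting $\Z$-grading $\Uq=\bigoplus_kU_q(\mathfrak{sl}_n)K_n^k$ every central element is a sum of homogeneous central elements $a_kK_n^k$, where $a_0\in Z(U_q(\mathfrak{sl}_n))$ and, for $k\neq0$, $a_k$ satisfies twisted commutation relations with $E_{n-1}$ and $F_{n-1}$; feeding in the $\mathfrak{sl}_n$ Harish-Chandra isomorphism (for $a_0$) and an analysis of the twisted relations (for the $a_k$), together with the dot-invariance just obtained, pins the support down as claimed. (Alternatively, one may observe that $\langle\UW,c\rangle$ is a normal domain over which $\mathrm{Im}\,\varphi$ is integral — $\Uq^0$ being integral over $\Uq^0_{ev}$ — so that it is enough to match the fields of fractions.)

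The hard part is the support restriction. Dot-invariance by itself is not sufficient: already for $\mathfrak{gl}_2$ the element $qK_1+K_2$ is fixed by the dot action of $W$, yet it is not in $\langle\UW,c\rangle$ and not a Harish-Chandra image — indeed $(qK_1+K_2)^2\in\langle\UW,c\rangle$ while $qK_1+K_2$ itself is a square root of an element of $\langle\UW,c\rangle$ that does not lie in it. So one genuinely has to exploit how $\Uq$ sits over $U_q(\mathfrak{sl}_n)$ and feed in the known $\mathfrak{sl}_n$ Harish-Chandra theorem to exclude these spurious invariants; everything else is bookkeeping.
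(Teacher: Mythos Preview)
Your approach is in line with the paper's, but there is little to compare against: the paper does not give a proof of this lemma. It simply states that the result follows ``by using the quantum Harish-Chandra isomorphism for $U_q(\mathfrak{sl}_n)$'' (citing Jantzen and Tanisaki) and points to \cite{HM} for an equivalent formulation. Your proposal makes this reduction explicit via the $\Z$-grading $\Uq=\bigoplus_k U_q(\mathfrak{sl}_n)K_n^k$, which is exactly the kind of argument the paper is gesturing at, and you correctly isolate the support restriction as the nontrivial step beyond dot-invariance. Your treatment is thus considerably more detailed than what the paper provides, while relying on the same key external input.

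One remark: the step you flag as ``the hard part'' is indeed where the real content lies, and your sketch there (analyse the twisted commutation of $a_k$ with $E_{n-1},F_{n-1}$ for $k\neq 0$) is plausible but not carried out; your alternative via integral closure over the normal domain $\langle\UW,c\rangle$ is cleaner and would need you to check that the fraction fields agree, which reduces to a Galois-type count comparing $[\Uq^0:\Uq^0_{ev}]$ with the index of the dot-invariants. Since the paper itself defers the entire argument to the literature, either route is acceptable at the level of detail expected here.
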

Note that $c$ is obviously $W$-invariant, and $c^{\pm 2}\in \UW$. An
equivalent description of this lemma can be found in \cite{HM}. In
this paper, a set of generators for $\ZZ$ were also given, which are
different from the quantum Casimirs operators of \cite{GZB, ZGB}.

\subsection{Quantum Casimir operators of $\Uq$}
The quantum Casimir operators of $\Uq$ are the main objects for
study in this paper, which we now briefly describe. As is well
known, in the quantum group setting, we have neither a good quantum
analogue of tensors nor a procedure for ``contracting tensors"
(however, see \cite{qtensor1, qtensor2}). Thus it is much harder to
explicitly construct central elements for quantum groups. The
construction of \cite{GZB, ZGB} was actually quite involved: it had
to invoke the theory of \cite{knots} and also made use the universal
$R$-matrix of $\Uq$. Thus for the sake of completeness and also
clearness we briefly explain the construction.

It is well-known that $\Uq$ is a quasi-triangular Hopf algebra,
i.e., there exists an invertible element $R\in\Uq\otimes\Uq$ which
is called the universal $R$-matrix of $\Uq$, such that
\begin{eqnarray*}
\begin{aligned}
R\D(x)=\D'(x)R, \quad \forall x\in\Uq, \\
(\D\otimes id)R=R_{13}R_{23},\quad (id\otimes\D)R=R_{13}R_{12},
\end{aligned}
\end{eqnarray*}
where $\Delta'$ is the opposite co-multiplication. Explicitly,
$\Delta'=T\circ \Delta$ where $T: \Uq\otimes\Uq\rar\Uq\otimes\Uq$ is
the linear map defined for any $x,y\in\Uq$ by $T(x\otimes
y)=y\otimes x$.  The $R$-matrix satisfies the celebrated Yang-Baxter
equation
\[ R_{12}R_{13}R_{23}=R_{23}R_{13}R_{12}.\]
Denote $R^T=T(R)$. Then $R^TR\D(x)=\D(x)R^TR$, $\forall$ $x\in\Uq$

For any $\a\in\Z$, Let $\Z_{\geqslant
\a}:=\{m\,|\,m\in\Z,\,m\geqslant\a\}$.  The following quantum
Casimir operators for $\Uq$ were constructed in \cite{GZB, ZGB}:
\begin{eqnarray*}
C_{n,k}={\rm Tr}_\pi\left(1\otimes
q^{\pi(2h\rho)}\left(\frac{\G-1\otimes1}{q-q^{-1}}\right)^k\right),\
\ k\in\Z_{\geqslant0},
\end{eqnarray*}
where ${\rm Tr}_\pi$ represents the trace taken over $\pi$, and
\begin{eqnarray*}
\G=(id\otimes\pi)R^TR,\ \ \
q^{\pi(2h\rho)}=\mbox{$\prod\limits_{i=1}^n$}q^{(n-2i+1)E_{i,i}}.
\end{eqnarray*}

Let $L_\L$ be a finite dimensional irreducible $\Uq$-module with
highest weight $\L\in\HH^*$. Then each $C_{n,k}$ acts on $L_\L$ by a
scalar, which is given by the following formula \cite{LZ}:
\begin{eqnarray*}
\chi_\L(C_{n,k})\!\!\!&=&\!\!\!(q-q^{-1})^{-k}\mbox{$\sum\limits_{i=1}^{n}$}
\big(q^{(\e_i,2\L+2\rho+\e_i)-C(\L_0)}-1\big)^k\\
\!\!\!&&\!\!\!\times
q^{C(\L_0)-(\e_i,\e_i)}\mbox{$\prod\limits_{j\neq i}^{n}$}
\frac{q^{(\e_i,2\L+2\rho+\e_i)}-q^{(\e_j,2\L+2\rho-\e_j)}}
{q^{(\e_i,2\L+2\rho+\e_i)}-q^{(\e_j,2\L+2\rho+\e_j)}},
\end{eqnarray*}
where $\L_0=\epsilon_1$.

Since $\varphi(z)$ belongs to $\Uq^0$ for every $z\in\ZZ$, the right
hand side of the above formula must be a polynomial in
$q^{(\e_i,2\L+2\rho)}$. Note that when applying $L_i$ to the highest
weight vector $v_{\L}$ of $L_\L$, we have
$L_iv_{\L}=q^{2(\e_i,\L+\rho)}v_\L$. Then it follows that
\begin{eqnarray}\label{formula}
C^0_{n,k}:=\varphi(C_{n,k})
=\mbox{$\sum\limits_{i=1}^{n}$}\big(\frac{q^{1-n}L_i-1}{q-q^{-1}}\big)^k
\mbox{$\prod\limits_{j\neq i}^{n}$}\frac{qL_i-q^{-1}L_j}{L_i-L_j}.
\end{eqnarray}

\section{Analysis of $G_{n,k}$}
Let us analyze the formula for $C^0_{n,k}$ to put it into a form
which will be readily usable for the proof of Theorem \ref{MT}.
Denote
\begin{eqnarray*}
G_{n,k}=\mbox{$\sum\limits_{i=1}^{n}$}{L_i}^kP_{n,i}\ \
\mbox{where}\ \ P_{n,i}=\mbox{$\prod\limits_{j\neq
i}^{n}$}\frac{qL_i-q^{-1}L_j}{L_i-L_j}.
\end{eqnarray*}
Then for any
$n\in\Z_{\geqslant 2}$ and $k\in\Z_{\geqslant 1}$, one can rewrite
$C^0_{n,k}$ as
\[
C^0_{n,k} = \frac{1}{(q^{-1}-q)^k}\sum_{j=0}^{k} \left(\begin{array}{ll}
k\\j
\end{array}\right)
(-q^{1-n})^j
G_{n,j}.
\]
We want to prove that $G_{n,k}\,(k=0,1,\cdots,n)$ are polynomials in
$L_i$.

\begin{lemm}\label{CM1}
For any $n\geqslant2$, the following two identities hold:
\begin{eqnarray}
G_{n,0}\!\!\!&=&\!\!\!q^{n-1}+q^{n-3}+\cdots+q^{3-n}+q^{1-n},\label{100312a01}\\
G_{n,1}\!\!\!&=&\!\!\!q^{n-1}(L_1+L_2+\cdots+L_n).\label{100312a02}
\end{eqnarray}
\end{lemm}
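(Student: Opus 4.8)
The plan is to evaluate the rational functions $G_{n,0}$ and $G_{n,1}$ by recognizing them as instances of a classical partial-fractions / symmetric-function identity. Observe first that $P_{n,i}=\prod_{j\neq i}\frac{qL_i-q^{-1}L_j}{L_i-L_j}$ is a rational function in $L_1,\dots,L_n$ whose only possible poles are along the hyperplanes $L_i=L_j$; but $G_{n,k}=\sum_i L_i^kP_{n,i}$ is symmetric, so if it has no poles at all it must be a polynomial, and for $k=0,1$ one expects it to have low degree. The clean way to see there are no poles is to introduce the auxiliary rational function of an extra variable $t$,
\[
\Phi(t)=\prod_{j=1}^n\frac{qt-q^{-1}L_j}{t-L_j},
\]
and compute its partial-fraction expansion: since $\Phi(t)\to q^n$ as $t\to\infty$ and has simple poles at $t=L_i$ with residue $L_i\,q^{-1}(q^2-1)\,P_{n,i}$ (after the obvious simplification of $\frac{qL_i-q^{-1}L_i}{1}$ type factors), one gets
\[
\Phi(t)=q^n+\sum_{i=1}^n\frac{(q-q^{-1})L_i\,P_{n,i}}{t-L_i}.
\]

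First I would extract $G_{n,1}$ from this: multiplying by $t$ and letting $t\to\infty$ gives $q^{n-1}\sum_{j=1}^n L_j + \cdots$ — more precisely, comparing the $t^{-1}$-coefficients on both sides after writing $\Phi(t)=q^n+\big((q-q^{-1})\sum_i L_iP_{n,i}\big)t^{-1}+O(t^{-2})$ against the expansion $\Phi(t)=\prod_j(q-q^{-1+?})\cdots$; concretely $\Phi(t)=q^n\prod_j\frac{t-q^{-2}L_j}{t-L_j}=q^n\big(1+(1-q^{-2})(\sum_j L_j)t^{-1}+O(t^{-2})\big)$. Hence $(q-q^{-1})G_{n,1}=(q-q^{-1})\sum_i L_iP_{n,i}=q^n(1-q^{-2})\sum_j L_j=(q^{n-1})(q-q^{-1})\sum_j L_j$, giving \eqref{100312a02}. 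For $G_{n,0}=\sum_i P_{n,i}$ I would instead evaluate a closely related sum: note $\sum_i P_{n,i}$ is the ``value at infinity'' contribution is not directly visible, so instead I would use the identity obtained by setting $t=0$ (or by a separate partial-fraction expansion of $\prod_j\frac{qt-q^{-1}L_j}{t-L_j}$ divided by $t$, or simply $\sum_i\frac{L_i P_{n,i}}{t-L_i}$ evaluated cleverly). The cleanest route for $G_{n,0}$: apply the general Lagrange-type interpolation identity $\sum_{i}\prod_{j\neq i}\frac{x-L_j}{L_i-L_j}=1$ with $x$ specialized; expanding $\prod_{j\neq i}(qL_i-q^{-1}L_j)=\prod_{j\neq i}q^{-1}(q^2L_i-L_j)$ and telescoping the resulting sum over $i$ produces the $q$-integer $[n]_q=q^{n-1}+q^{n-3}+\cdots+q^{1-n}$, which is exactly \eqref{100312a01}.

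I would organize the write-up as: (1) state and prove the partial-fraction expansion of $\Phi(t)$ above by comparing poles and behavior at infinity (a one-line residue computation); (2) read off $G_{n,1}$ from the $t^{-1}$ coefficient; (3) handle $G_{n,0}$ either by a second, parallel partial-fraction argument applied to $\Phi(t)/t$ near $t=\infty$ together with the residue at $t=0$, or — perhaps more transparently — by a direct induction on $n$, peeling off the term $L_n$ and using $\frac{qL_i-q^{-1}L_n}{L_i-L_n}=q + \frac{(q-q^{-1})L_i}{L_i-L_n}\cdot(-q^{-1}L_n)\cdot(\text{sign})$... actually the additive split $\frac{qL_i-q^{-1}L_n}{L_i-L_n}=q^{-1}+\frac{(q-q^{-1})L_i}{L_i-L_n}$ is cleanest, so that $P_{n,i}=\big(q^{-1}+\tfrac{(q-q^{-1})L_i}{L_i-L_n}\big)P_{n-1,i}$ for $i<n$, and summing over all $i$ reduces $G_{n,0}$ to $q^{-1}G_{n-1,0}$ plus a correction that telescopes to $q^{n-1}$.

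The main obstacle I anticipate is bookkeeping the correction terms in the $G_{n,0}$ computation — specifically, showing that the ``extra'' sum $\sum_{i=1}^{n-1}\frac{(q-q^{-1})L_i}{L_i-L_n}P_{n-1,i}$ together with the $i=n$ term $P_{n,n}=q^{n-1}\cdot\prod_{j<n}\frac{\,?\,}{\,?\,}$ collapses to exactly $q^{n-1}$, which amounts to recognizing it as $L_n^{?}$ times an instance of the same family evaluated one size down (essentially $G_{n,1}$-type data in the variables $L_1,\dots,L_{n-1},L_n$), creating a small coupled recursion between $G_{n,0}$ and the leading behavior. This is routine but needs care with the $q$-powers; everything else is a short residue or degree count. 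Throughout one works over the fraction field, so the identities, once verified as rational functions, hold as polynomial identities since both sides are manifestly polynomial.
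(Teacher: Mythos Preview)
Your approach is correct and genuinely different from the paper's. The paper proceeds by first deriving the three-term recursion
\[
G_{n,k}=L_nG_{n,k-1}+qG_{n-1,k}-q^{-1}L_nG_{n-1,k-1}
\]
via the telescoping identity $G_{n,k}-L_nG_{n,k-1}=\sum_i L_i^{k-1}(L_i-L_n)P_{n,i}$, verifies the two formulae by hand for $n=2,3$, and then runs a coupled induction on $n$ for $G_{n,0}$ and $G_{n,1}$ simultaneously. Your route via the auxiliary product $\Phi(t)=\prod_j\frac{qt-q^{-1}L_j}{t-L_j}$ and its partial-fraction expansion is more conceptual and, for $G_{n,1}$, strictly shorter: the $t^{-1}$-coefficient comparison you wrote out is complete as stated. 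What the paper's approach buys is that its recursion is reused immediately afterwards (to prove polynomiality of all $G_{n,k}$ and to set up the generating series $S_n(t)$), so it is not wasted effort there; your argument, by contrast, handles only $k=0,1$ but does so without any induction for $k=1$.

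One remark on execution: your discussion of $G_{n,0}$ wanders through several options without settling. The cleanest is the one you mention only in passing --- apply the same residue method to $\Phi(t)/t$. This function has simple poles at $t=L_i$ with residue $(q-q^{-1})P_{n,i}$, a simple pole at $t=0$ with residue $\Phi(0)=q^{-n}$, and vanishes at infinity like $q^n/t$; equating the sum of residues to the $t^{-1}$-asymptotics gives $q^{-n}+(q-q^{-1})G_{n,0}=q^n$, i.e.\ $G_{n,0}=(q^n-q^{-n})/(q-q^{-1})$, which is exactly the claimed $q$-integer. This is a two-line argument and avoids the ``bookkeeping obstacle'' you flag. (If you prefer the inductive peel-off, note that the correction term you were worried about is precisely $\Phi_{n-1}(L_n)=P_{n,n}$ evaluated via the $(n-1)$-variable partial-fraction identity, which collapses it to $q^{n-1}$ immediately; so that route also closes, but the $\Phi(t)/t$ version is tidier.) I would recommend committing to that and dropping the other sketches.
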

{\bf Proof}\ \ For $i\neq n,n-1$, one has
\begin{eqnarray*}
P_{n,i}\!\!\!&=&\!\!\!P_{n-2,i}\cdot\frac{qL_i-q^{-1}L_{n-1}}{L_i-L_{n-1}}\cdot\frac{qL_i-q^{-1}L_n}{L_i-L_n}\\
\!\!\!&=&\!\!\!P_{n-2,i}\cdot\frac{qL_i-q^{-1}L_{n-1}}{L_i-L_{n-1}}\cdot\frac{qL_i-q^{-1}L_n}{L_{n-1}-L_{n}}
-P_{n-2,i}\cdot\frac{qL_i-q^{-1}L_n}{L_i-L_{n}}\cdot\frac{qL_i-q^{-1}L_{n-1}}{L_{n-1}-L_{n}}.
\end{eqnarray*}
For any $m,n\in\Z_{\geqslant2}$ and $i\in\Z_{\geqslant0}$,  denote
$P_{m,i,n}=P_{m,i}\frac{qL_i-q^{-1}L_{n}}{L_i-L_{n}}$. Then
$P_{n,i}$ can be rewritten as
\begin{eqnarray}\label{0126m01}
P_{n,i}\!\!\!&=&\!\!\!P_{n-2,i,n-1}
\cdot\frac{qL_i-q^{-1}L_n}{L_{n-1}-L_{n}}
-P_{n-2,i,n}\cdot\frac{qL_i-q^{-1}L_{n-1}}{L_{n-1}-L_{n}}.
\end{eqnarray}
For convenience, denote
$G_{n,k,\widehat{j}}=\mbox{$\sum\limits_{j\neq
i=1}^{n}$}{L_i}^kP_{n,i}$. Then we can rewrite $G_{n,0}$ as follows:
\begin{eqnarray}
G_{n,0}\!\!\!&=&\!\!\!\mbox{$\sum\limits_{i=1}^{n-2}$}P_{n-2,i,n-1}\cdot\frac{qL_i-q^{-1}L_n}{L_{n-1}-L_{n}}+P_{n,n-1}
-\mbox{$\sum\limits_{i=1}^{n-2}$}P_{n-2,i,n}\cdot\frac{qL_i-q^{-1}L_{n-1}}{L_{n-1}-L_{n}}+P_{n,n}\nonumber\\
\!\!\!&=&\!\!\!\frac{qG_{n-1,1}-q^{-1}L_nG_{n-1,0}}{L_{n-1}-L_{n}}
-\frac{qG_{n,1,\widehat{n-1}}-q^{-1}L_{n-1}G_{n,0,\widehat{n-1}}}{L_{n-1}-L_{n}}.\label{0126m02}
\end{eqnarray}
Meanwhile, for any $n\in\Z_{\geqslant2}$ and $k\in\Z_{\geqslant1}$,
we have the following computations:
\begin{eqnarray*}
G_{n,k}-L_nG_{n,k-1}\!\!\!&=&\!\!\!\mbox{$\sum\limits_{i=1}^n$}L_i^{k-1}(L_i-L_n)P_{n,i}\\
\!\!\!&=&\!\!\!\mbox{$\sum\limits_{i=1}^{n-1}$}L_i^{k-1}P_{n-1,i}(qL_i-q^{-1}L_n)\\
\!\!\!&=&\!\!\!qG_{n-1,k}-q^{-1}L_nG_{n-1,k-1}.
\end{eqnarray*}
Thus
\begin{eqnarray}\label{0126a01}
G_{n,k}\!\!\!&=&\!\!\!L_nG_{n,k-1}+qG_{n-1,k}-q^{-1}L_nG_{n-1,k-1},\
\ \forall\,\,n\in\Z_{\geqslant2},\,k\in\Z_{\geqslant1}.
\end{eqnarray}
It is easy to see that both (\ref{100312a01}) and (\ref{100312a02})
hold for the cases $n=2$ and $n=3$. Conbining the identities
(\ref{0126m02}) and (\ref{0126a01}), using induction on $n$ in
(\ref{100312a01}) and (\ref{100312a02}), one can get the formulae
for $G_{n,0}$ and $G_{n,1}$. Hence we complete the proof of this
lemma.\QED

We introduce a generating function
$S_n(t)=\mbox{$\sum\limits_{k=0}^{\infty}$}t^kG_{n,k}$. Then
\begin{eqnarray*}
S_n(t)-G_{n,0}-tL_nS_n(t)=qS_{n-1}(t)-q^{-1}tL_nS_{n-1}(t)-qG_{n-1,0},
\end{eqnarray*}
which implies
\begin{eqnarray*}
S_n(t)\!\!\!&=&\!\!\!\frac{q-q^{-1}tL_n}{1-tL_n}S_{n-1}(t)+\frac{G_{n,0}-qG_{n-1,0}}{1-tL_n}\\
\!\!\!&=&\!\!\!\frac{q-q^{-1}}{1-tL_n}S_{n-1}(t)+q^{-1}S_{n-1}(t)+\frac{q^{-(n-1)}}{1-tL_n}.
\end{eqnarray*}
Thus
\begin{eqnarray}\label{100303a01}
G_{n,k}=qG_{n-1,k}+(q-q^{-1})\mbox{$\sum\limits_{i=0}^{k-1}$}L_n^{k-i}G_{n-1,i}+q^{-(n-1)}L_{n}^k.
\end{eqnarray}

Note that it is by no means obvious from formula \eqref{formula}
itself that its right hand side is a polynomial in $L_i$'s. To put
our mind at peace, we observe the following result.

\begin{lemm}\label{CM2}
For any $n\in\Z_{\geqslant2}$ and $k\in\Z_{\geqslant0}$, $G_{n,k}$
is a polynomial in $L_i$'s.
\end{lemm}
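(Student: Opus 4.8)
The plan is to prove the statement by induction on $n$, letting the recursion \eqref{100303a01} do essentially all the work. The point is that \eqref{100303a01} expresses $G_{n,k}$ as a $\K$-linear combination of the quantities $G_{n-1,i}$ ($0\leqslant i\leqslant k$) with coefficients that are monomials in $L_n$, together with the explicit monomial $q^{-(n-1)}L_n^k$. Hence, once we know that every $G_{n-1,i}$ is a polynomial in $L_1,\dots,L_{n-1}$, it is immediate that every $G_{n,k}$ is a polynomial in $L_1,\dots,L_n$. So the whole argument reduces to disposing of a base case and then quoting \eqref{100303a01}.

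For the base case I would take $n=2$ and compute $G_{2,k}$ directly from the definitions:
\[
G_{2,k}=\frac{L_1^k\,(qL_1-q^{-1}L_2)-L_2^k\,(qL_2-q^{-1}L_1)}{L_1-L_2}.
\]
The numerator vanishes when $L_1=L_2$, so the factor $L_1-L_2$ cancels and $G_{2,k}$ is a polynomial in $L_1,L_2$ for every $k\geqslant 0$. (Equivalently, one may introduce the convention $P_{1,1}:=1$, $G_{1,k}:=L_1^k$, note that \eqref{0126a01}, and hence \eqref{100303a01}, remains valid for $n=2$ under this convention, and begin the induction at $n=1$, where the claim is trivial; or simply recall that the low cases $n=2,3$ were already handled en route to Lemma \ref{CM1}.) With the base case settled, the induction step is exactly the observation of the previous paragraph: assuming $G_{n-1,i}\in\K[L_1,\dots,L_{n-1}]$ for all $i\geqslant 0$, equation \eqref{100303a01} presents $G_{n,k}$ as a finite sum of products of such polynomials with powers of $L_n$ and scalars in $\K$, so $G_{n,k}\in\K[L_1,\dots,L_n]$. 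This completes the induction.

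I do not expect any serious obstacle here: the mathematical content of the lemma is entirely contained in the recursion \eqref{100303a01}, whose derivation via the generating function $S_n(t)$ is the real work and has already been carried out just above the statement. The only step that deserves a moment's care is the base case — verifying that the a priori rational function $G_{2,k}$ is genuinely a polynomial (i.e.\ that the spurious pole along $L_1=L_2$ cancels) — and this is the short computation displayed above; everything afterwards is bookkeeping.
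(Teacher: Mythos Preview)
Your proposal is correct and is essentially the paper's own argument: the paper's proof is the one-line remark that the lemma follows from Lemma~\ref{CM1} together with the recursions \eqref{0126a01} and \eqref{100303a01}, i.e.\ exactly the induction on $n$ via \eqref{100303a01} that you spell out. The only cosmetic difference is your handling of the base case (you verify $G_{2,k}$ directly, whereas the paper implicitly leans on Lemma~\ref{CM1} and the $n=1$ convention), but this is the same proof.
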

\begin{proof} This lemma follows from Lemma \ref{CM1} and the identities
(\ref{0126a01}) and (\ref{100303a01}).
\end{proof}

Form the formulae proved in Lemma \ref{CM1},  we see that if $L_i$
is replaced by $e^{\e_i}$ for $i=1,2,\cdots,n$, then the
coefficient of $q^{n-1}$ in $G_{n,1}$ corresponds to the character
of the basic irreducible representation of $\Uq$. A natural problem
is to understand all the $G_{n,k}$ in similar terms, and we address
this problem now.

Let us introduce a set of elements of $\l^i_k$ in
$\sum_{i=1}^n\Z_+\epsilon_i$, which we write in terms of their
coordinates relative to the basis $\epsilon_i$. We let
$\l^1_k=(k,0,0,\cdots,0)$, $\l^2_k=(k-1,1,0,\cdots,0)$, $\cdots$,
$\l^k_k=(1,\cdots,1,0,\cdots,0)$ in the case $k<n$. We also set
$\l^1_k=(k,0,0,\cdots,0)$, $\l^2_k=(k-1,1,0,\cdots,0)$, $\cdots$,
$\l^k_k=(k-n+1,1,\cdots,1)$ in the case
$k\geqslant n$. Note that these weights respectively correspond to
the following Young diagrams:\vs{-40pt}

\cl{\begin{picture}(60,160)
\put(-160,80){\framebox(10,10)}\put(-150,80){\framebox(10,10)}\put(-140,80){\framebox(20,10)}\put(-136,81){$\cdots$}\put(-120,80){\framebox(10,10)}
\put(-145,96){${k}\atop{{}}$}
\put(-80,80){\framebox(10,10)}\put(-70,80){\framebox(10,10)}\put(-60,80){\framebox(20,10)}\put(-57,81){$\cdots$}\put(-40,80){\framebox(10,10)}
\put(-65,96){${k-1}\atop{{}}$}\put(-80,70){\framebox(10,10)}
\put(0,80){\framebox(10,10)}\put(10,80){\framebox(10,10)}\put(20,80){\framebox(20,10)}\put(24,81){$\cdots$}\put(40,80){\framebox(10,10)}
\put(15,96){${k-2}\atop{{}}$}\put(0,70){\framebox(10,10)}\put(0,60){\framebox(10,10)}
\put(100,80){$\cdots$}
\end{picture}}\vs{-90pt}
\cl{\begin{picture}(30,160) \put(-132,80){$\cdots$}
\put(-82,60){{\scriptsize $n-2$}}
\put(-52,80){\framebox(10,10)}\put(-42,80){\framebox(10,10)}\put(-32,80){\framebox(20,10)}\put(-27,81){$\cdots$}\put(-12,80){\framebox(10,10)}
\put(-38,96){${k-n+3}\atop{{}}$}
\put(-52,70){\framebox(10,10)}\put(-52,50){\framebox(10,20)}\put(-48,56){$\vdots$}\put(-52,40){\framebox(10,10)}
\put(10,60){{\scriptsize $n-1$}}
\put(40,80){\framebox(10,10)}\put(50,80){\framebox(10,10)}\put(60,80){\framebox(20,10)}\put(63,81){$\cdots$}\put(80,80){\framebox(10,10)}
\put(40,70){\framebox(10,10)}\put(40,50){\framebox(10,20)}\put(44,54){$\vdots$}\put(40,40){\framebox(10,10)}
\put(52,96){${k-n+2}\atop{{}}$} \put(109,60){{\scriptsize $n$}}
\put(126,80){\framebox(10,10)}\put(136,80){\framebox(10,10)}\put(146,80){\framebox(20,10)}\put(149,81){$\cdots$}\put(166,80){\framebox(10,10)}
\put(126,70){\framebox(10,10)}\put(126,50){\framebox(10,20)}\put(129.5,56){$\vdots$}\put(126,40){\framebox(10,10)}
\put(138,96){${k-n+1}\atop{{}}$}\put(182,42){.}
\end{picture}}\vs{-40pt}

Denote by $Ch L_{\l^i_k}$ the character of the irreducible
$\gl$-representation with highest weight $\l^i$. Then
\begin{eqnarray*}
Ch L_{\l^i_k}=\frac{\mbox{$\sum\limits_{w\in
W}$}sign(w)e^{w((k-i+1)\e_1+\e_2+\cdots+\e_i+\widetilde{\rho})}}
{\mbox{$\prod\limits_{i<j}$}(e^{\e_i}-e^{\e_j})},
\end{eqnarray*}
where $\widetilde{\rho}=\rho+\frac{1}{2}(n-1)\sum_{i=1}^n\e_i$. The
formula is valid  when $k-i\ge 0$. When $i>k$, the right hand
vanishes identically.

\begin{lemm} Let $Ch_{n,k}$ be the expression obtained from $G_{n,k}$ by
replacing $L_i$ by $e^{\e_i}$ for all $i=1,2,\cdots,n$. Then
\begin{eqnarray}\label{0201m01}
\mbox{$\sum\limits_{i=1}^n$} (-1)^{i-1}q^{n-2i+1} Ch L_{\l^i_k}
=Ch_{n,k}, \quad \forall\,\,k=1,2,\cdots,
\end{eqnarray}
where $Ch L_{\l^i_k}=0$ if $k<i$.
\end{lemm}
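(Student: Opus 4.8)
\emph{Strategy.} Put $x_i=e^{\e_i}$, and for a partition $\mu$ with at most $n$ parts write $s_\mu=s_\mu(x_1,\dots,x_n)$ for the Schur polynomial; let $h_r=h_r(x)$, $e_r=e_r(x)$ be the complete and elementary symmetric polynomials, with generating series $H(t)=\sum_{r\ge0}h_rt^r=\prod_{i=1}^n(1-tx_i)^{-1}$ and $E(v)=\sum_{r\ge0}e_rv^r=\prod_{i=1}^n(1+vx_i)$, so that $E(v)H(-v)=1$. Since $\widetilde{\rho}=\rho+\frac12(n-1)\sum_i\e_i=(n-1,n-2,\dots,1,0)$, the displayed formula for $Ch L_{\l^i_k}$ is simply the bialternant (ratio of alternants) expression for the Schur polynomial, so $Ch L_{\l^i_k}=s_{\l^i_k}$, and $\l^i_k=(k-i+1,1^{i-1})$ is a hook shape. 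Hence \eqref{0201m01} reads $\sum_{i=1}^n(-1)^{i-1}q^{n-2i+1}s_{(k-i+1,1^{i-1})}=Ch_{n,k}$ for all $k\ge1$. My plan is to pack each side into a generating function $\sum_{k\ge1}t^k(\,\cdot\,)$ and show that both equal
\[
\frac{q^n}{q-q^{-1}}\left(\prod_{i=1}^n\frac{1-q^{-2}tx_i}{1-tx_i}-1\right).
\]

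\emph{The $G$-side.} I would substitute $L_i\mapsto x_i$ in the recursion for $S_n(t)$ obtained just before \eqref{100303a01}; then $\widehat S_n(t):=\sum_{k\ge0}t^kCh_{n,k}$ satisfies, for $n\ge2$,
\[
\widehat S_n(t)=\Bigl(\frac{q-q^{-1}}{1-tx_n}+q^{-1}\Bigr)\widehat S_{n-1}(t)+\frac{q^{-(n-1)}}{1-tx_n},\qquad \widehat S_1(t)=\frac{1}{1-tx_1},
\]
the base case because $G_{1,k}=L_1^k$. The coefficient of $\widehat S_{n-1}$ equals $\frac{q-q^{-1}tx_n}{1-tx_n}=q\cdot\frac{1-q^{-2}tx_n}{1-tx_n}$, so solving the first-order recursion gives
\[
\widehat S_n(t)=\frac{1}{q-q^{-1}}\left(q^n\prod_{i=1}^n\frac{1-q^{-2}tx_i}{1-tx_i}-q^{-n}\right).
\]
By \eqref{100312a01} the constant term is $Ch_{n,0}=G_{n,0}=\frac{q^n-q^{-n}}{q-q^{-1}}$, and subtracting it leaves exactly the rational function displayed in the Strategy paragraph.

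\emph{The character side.} Here I would use the dual Jacobi--Trudi formula for a hook, $s_{(a,1^b)}=\sum_{j=0}^b(-1)^jh_{a+j}e_{b-j}$ ($a\ge1$, $b\ge0$). Multiplying by $t^av^b$, summing, collecting the $h$'s into a geometric series, and invoking $E(v)H(-v)=1$ yields the bivariate identity
\[
\sum_{a\ge1}\sum_{b\ge0}s_{(a,1^b)}\,t^av^b=\frac{t\,(E(v)H(t)-1)}{t+v},
\]
whose right-hand side is a polynomial in $v$ of degree $\le n-1$, because $E(v)H(t)-1$ has $v$-degree $n$ and vanishes at $v=-t$. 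Writing $b=i-1$ and $a=k-i+1$ (the terms with $i>k$, i.e.\ $a\le0$, being absent, consistently with $Ch L_{\l^i_k}=0$ for $k<i$),
\[
\sum_{k\ge1}t^k\sum_{i=1}^n(-1)^{i-1}q^{n-2i+1}s_{\l^i_k}=q^{n-1}\sum_{b=0}^{n-1}(-q^{-2}t)^b\sum_{a\ge1}s_{(a,1^b)}\,t^a.
\]
Since the bivariate series is a polynomial of degree $\le n-1$ in $v$, the finite sum over $b$ is recovered by the substitution $v=-q^{-2}t$, giving $q^{n-1}\cdot\frac{t(E(-q^{-2}t)H(t)-1)}{t-q^{-2}t}$; as $E(-q^{-2}t)H(t)=\prod_i\frac{1-q^{-2}tx_i}{1-tx_i}$, this collapses to the same rational function as on the $G$-side, so comparing coefficients of $t^k$ proves \eqref{0201m01} for every $k\ge1$.

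\emph{Where the difficulty lies.} The two genuinely computational points are the closed form for $\widehat S_n(t)$ and the observation that the hook generating function $\frac{t(E(v)H(t)-1)}{t+v}$ is a polynomial in $v$ of degree $\le n-1$; the latter is what makes a \emph{finite} $q$-weighted alternating sum of hook characters summable by a single substitution, and hence equal to the generating function of the $Ch_{n,k}$. As an alternative I could avoid generating functions and compare coefficients of $t^k$ directly: after inserting the hook Jacobi--Trudi expansion and reorganizing, the one input needed is the classical identity $\sum_{m\ge0}(-1)^mh_{k-m}e_m=0$ for $k\ge1$ (equivalently $H(t)E(-t)=1$), and the boundary contributions from truncating the sums at $m=\min(n,k)$ cancel in pairs.
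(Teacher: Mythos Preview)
Your proof is correct, but it follows a genuinely different route from the paper's.

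The paper works directly with the Weyl character formula. It packages the $q$-weighted alternating sum of hook characters into a single numerator
\[
q^{n-1}\sum_{w\in W}\mathrm{sign}(w)\,w\Bigl(e^{k\e_1+\widetilde\rho}\prod_{j=2}^n\bigl(1-q^{-2}e^{-(\e_1-\e_j)}\bigr)\Bigr),
\]
divides by the Weyl denominator, and then splits the sum over $W$ via the coset decomposition $W=\bigcup_\sigma \sigma W_{n-1}$, where $W_{n-1}=\mathrm{Stab}(\e_1)$. The inner $W_{n-1}$-sum is recognised as the Weyl formula for the trivial character and collapses to $1$, leaving precisely the defining expression $\sum_\sigma \sigma\bigl(e^{k\e_1}\prod_{j\ne1}\frac{qe^{\e_1}-q^{-1}e^{\e_j}}{e^{\e_1}-e^{\e_j}}\bigr)=Ch_{n,k}$.

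Your argument instead runs through generating functions: you solve the $S_n(t)$ recursion (established just before \eqref{100303a01}) to obtain the closed form
\[
\widehat S_n(t)=\frac{1}{q-q^{-1}}\Bigl(q^n\prod_{i=1}^n\frac{1-q^{-2}tx_i}{1-tx_i}-q^{-n}\Bigr),
\]
and on the character side use the hook Jacobi--Trudi expansion and the identity $E(-t)H(t)=1$ to evaluate the bivariate series $\sum_{a\ge1,b\ge0}s_{(a,1^b)}t^av^b=\frac{t(E(v)H(t)-1)}{t+v}$ at $v=-q^{-2}t$, the substitution being legitimate because the series is a polynomial in $v$ of degree $\le n-1$.

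Both arguments are clean. The paper's proof is self-contained (it uses neither Lemma~\ref{CM1} nor the recursion) and makes transparent the Weyl-group mechanism that turns the alternating hook sum into the single $W/W_{n-1}$ sum defining $G_{n,k}$. Your approach, by contrast, yields the product formula for $\widehat S_n(t)$ as a pleasant by-product and connects the result to standard symmetric-function identities; it also makes the case distinction $k<n$ versus $k\ge n$ disappear automatically, whereas the paper treats these separately.
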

{\bf Proof}\ \ Denote $W_{n-1}=\{w\in W\,|\,w(\e_1)=\e_1\}$.  Then
$W/W_{n-1}=\{[w] \mid w\in W\,|\,w(\e_1)=\e_j,\
\forall\,\,j\neq1\}$. Denote
$\widetilde{\rho}_{n-1}=\widetilde{\rho}-(n-1)\e_1$. In the case
when $k\geqslant n$, the left side of (\ref{0201m01}) can be rewritten as
follows:
\begin{eqnarray*}
&&\frac{q^{n-1}\mbox{$\sum\limits_{w\in W}$} sign(w)w\big(e^{k\e_1+\widetilde{\rho}}
\mbox{$\prod\limits_{i=2}^n$}(1-q^{-2}e^{-(\e_1-\e_i)})\big)}
{\mbox{$\prod\limits_{i<j}$}(e^{\e_i}-e^{\e_j})}\\
&&=q^{n-1}\mbox{$\sum\limits_{w\in W}$} w\frac{e^{k\e_1+\widetilde{\rho}}
\mbox{$\prod\limits_{i=2}^n$}(1-q^{-2}e^{-(\e_1-\e_i)})}
{\mbox{$\prod\limits_{i<j}$}(e^{\e_i}-e^{\e_j})}\\
&&=q^{n-1}\mbox{$\sum\limits_{[\sigma]\in W/W_{n-1}}$}
\mbox{$\sum\limits_{w'\in W_{n-1}}$}\sigma\big(
w'(\frac{e^{\widetilde{\rho}_{n-1}}}{\mbox{$\prod\limits_{1<i<j}$}(e^{\e_i}-e^{\e_j})})e^{(k+n-1)\e_1}
\mbox{$\prod\limits_{j=2}^n$}\frac{1-q^{-2}e^{-(\e_1-\e_j)}}{e^{\e_1}-e^{\e_j}}\big).
\end{eqnarray*}
According to the fact $Ch\,L_0=1$, we have the following identity
\begin{eqnarray*}
\mbox{$\sum\limits_{[w']\in W/W_{n-1}}$}
w'\frac{e^{\widetilde{\rho}_{n-1}}}{\mbox{$\prod\limits_{i<j}$}(e^{\e_i}-e^{\e_j})}=1.
\end{eqnarray*}

Hence the left side of (\ref{0201m01}) can be further simplified as follows:
\begin{eqnarray*}
&&q^{n-1}\mbox{$\sum\limits_{\sigma\in W/W_{n-1}}$}\sigma(e^{k+n-1)\e_1}
\mbox{$\prod\limits_{j=2}^n$}\frac{1-q^{-2}e^{-(\e_1-\e_j)}}{e^{\e_1}-e^{\e_j}})\\
&&=\mbox{$\sum\limits_{[\sigma]\in W/W_{n-1}}$}\sigma(e^{k\e_1}
\mbox{$\prod\limits_{j=2}^n$}\frac{qe^{\e_1}-q^{-1}e^{\e_j}}{e^{\e_1}-e^{\e_j}}),
\end{eqnarray*}
which is just the right side of (\ref{0201m01}), i.e. $G_{n,k}$
after  replacing $e^{\e_i}$ by $L_i$ for $i=1,2,\cdots,n$. Thus the
lemma in the case $k\geqslant n$ follows.

In the case $1\leqslant k<n$, the proof can be proceed similarly by
recalling the fact $ChL_{\l^i_k}=0$ for $i>k$. We
omit the details.
\QED

We have the following immediate consequence of the above lemma.

\begin{coro}
Set $\G_{k, i}=0$ if $i>k$. For $i\leqslant k$, let $\G_{k, i}$ be
obtained by replacing each $e^{\epsilon_i}$ in $Ch L_{\l_k^i}$ by
$L_i\in \Uq^0$.  Then
\begin{eqnarray*}
G_{n,k}&=&\mbox{$\sum\limits_{i=1}^n$} (-1)^{i-1} q^{n-2i+1} \G_{k, i}.
\end{eqnarray*}
\end{coro}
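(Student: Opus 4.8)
The plan is to deduce this corollary directly from the preceding lemma by unwinding the definitions. The lemma asserts that the ``character version'' identity
\[
\mbox{$\sum\limits_{i=1}^n$} (-1)^{i-1}q^{n-2i+1}\, Ch L_{\l^i_k} = Ch_{n,k}
\]
holds in the ring of (formal, or $W$-invariant polynomial) exponentials $e^{\e_1},\dots,e^{\e_n}$, with the convention $Ch L_{\l^i_k}=0$ for $i>k$. By construction $Ch_{n,k}$ is the expression obtained from $G_{n,k}$ under the substitution $L_i\mapsto e^{\e_i}$. So I would first observe that this substitution, viewed as a map sending the commuting variables $L_i$ to the commuting variables $e^{\e_i}$, is an isomorphism of polynomial rings (Lemma \ref{CM2} guarantees $G_{n,k}$ genuinely lies in $\K[L_1,\dots,L_n]$, so there is no ambiguity about denominators), hence identities valid after the substitution pull back to identities in $\K[L_1,\dots,L_n]$.

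Next I would set up the back-substitution carefully. For $i\le k$ define $\G_{k,i}$ to be the element of $\Uq^0$ obtained from $Ch L_{\l^i_k}$ by replacing each $e^{\e_j}$ with $L_j$; this makes sense because each character $Ch L_{\l^i_k}$ of a finite-dimensional $\gl$-module is a genuine polynomial in $e^{\e_1},\dots,e^{\e_n}$ (a nonnegative integer combination of monomials $e^{\mu_1\e_1+\cdots+\mu_n\e_n}$ with $\mu_j\ge 0$), so the substitution lands in the subalgebra generated by the $L_j$, which sits inside $\Uq^0$. For $i>k$ set $\G_{k,i}=0$, consistent with the vanishing convention for $Ch L_{\l^i_k}$. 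Then the right-hand side $\sum_{i=1}^n(-1)^{i-1}q^{n-2i+1}\G_{k,i}$ is precisely the image of the left-hand side of the lemma's identity under the (inverse of the) substitution $L_j\mapsto e^{\e_j}$, while the image of $Ch_{n,k}$ under that inverse substitution is $G_{n,k}$ by the definition of $Ch_{n,k}$. Applying the ring isomorphism to the lemma's displayed equation therefore yields exactly
\[
G_{n,k} = \mbox{$\sum\limits_{i=1}^n$} (-1)^{i-1} q^{n-2i+1}\, \G_{k,i},
\]
which is the claim.

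The only point that needs a word of care — and the one I would flag as the ``obstacle'', though it is minor — is making sure the substitution $e^{\e_j}\leftrightarrow L_j$ is literally a ring isomorphism onto the image and not merely a specialization that could collapse relations. Since the $e^{\e_j}$ are algebraically independent formal exponentials and the $L_j$ are algebraically independent elements of $\Uq^0_{ev}$ (they generate a Laurent polynomial algebra), the correspondence $L_j^{a}\leftrightarrow e^{a\e_j}$ extends to a well-defined bijective algebra homomorphism between $\K[e^{\pm\e_1},\dots,e^{\pm\e_n}]$ and the subalgebra of $\Uq^0$ generated by the $L_j^{\pm1}$, restricting to an isomorphism on the polynomial subrings. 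Because every quantity appearing — $G_{n,k}$, $Ch_{n,k}$, and each $Ch L_{\l^i_k}$ — is an actual polynomial (no negative powers of $L_j$ or $e^{\e_j}$ occur, the weights $\l^i_k$ having nonnegative coordinates), the identity transports cleanly with no denominators to track. Hence the corollary follows immediately from the lemma, and I would present it in essentially that one-paragraph form.
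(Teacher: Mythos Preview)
Your proposal is correct and matches the paper's approach: the paper states this corollary as an ``immediate consequence'' of the preceding lemma with no further argument, and your write-up simply makes explicit the routine observation that the substitution $e^{\e_j}\leftrightarrow L_j$ is an algebra isomorphism carrying the lemma's identity to the one claimed. There is nothing to add.
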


Note that the nonzero $\G_{k, i}$ are symmetric polynomials in
$L_1$, $L_2$, $\cdots$, $L_n$.

Let $\mathcal{G}_{n,k}$ denote the rational function in $q$ obtained
by replacing $e^{\e_i}$ by $q^{2(\e_i,\Lambda+\rho)}$ for  all
$i=1,2,\cdots,n$ on the left side of (\ref{0201m01}). Then we obtain
a new formula for the eigenvalues of the quantum Casimir operators
of $\Uq$ in the irreducible representation with highest weight $\L$.
\begin{coro}
\begin{eqnarray*}
\chi_\L(C_{n,k})=(q-q^{-1})^{-k}\sum_{l=0}^k(-1)^{k-l}
\left(\begin{array}{ll}
k\\l
\end{array}\right)(q^{1-n})^l\mathcal{G}_{n,l}.
\end{eqnarray*}
\end{coro}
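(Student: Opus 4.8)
The statement to prove is the final corollary, which gives a new eigenvalue formula
\[
\chi_\L(C_{n,k})=(q-q^{-1})^{-k}\sum_{l=0}^k(-1)^{k-l}
\binom{k}{l}(q^{1-n})^l\mathcal{G}_{n,l}.
\]

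The plan is to simply assemble the pieces already established in the excerpt. First I would recall the identity displayed just before Lemma \ref{CM1}, namely
\[
C^0_{n,k}=\frac{1}{(q^{-1}-q)^k}\sum_{j=0}^k\binom{k}{j}(-q^{1-n})^j G_{n,j},
\]
and rewrite the overall prefactor $(q^{-1}-q)^{-k}=(-1)^k(q-q^{-1})^{-k}$, so that $(-1)^k(-q^{1-n})^j=(-1)^{k-j}(q^{1-n})^j$; this already produces the exact combinatorial shape of the claimed formula, but with $G_{n,j}$ in place of $\mathcal{G}_{n,j}$. Second, I would invoke the relation $C^0_{n,k}=\varphi(C_{n,k})$ together with the observation made right after formula \eqref{formula}: since $\varphi(z)$ acts on the highest weight vector $v_\L$ of $L_\L$ by the scalar $\chi_\L(z)$, and $L_i v_\L=q^{2(\e_i,\L+\rho)}v_\L$, evaluating $C^0_{n,k}$ on $v_\L$ amounts to substituting $L_i\mapsto q^{2(\e_i,\L+\rho)}$ in the polynomial expression for $C^0_{n,k}$. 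Hence $\chi_\L(C_{n,k})$ equals the right-hand side above with each $G_{n,j}$ replaced by its value under $L_i\mapsto q^{2(\e_i,\L+\rho)}$.

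Third, I would identify that substituted quantity with $\mathcal{G}_{n,j}$. By Lemma \ref{0201m01} (the unnumbered lemma giving $\sum_i(-1)^{i-1}q^{n-2i+1}Ch L_{\l^i_k}=Ch_{n,k}$), the polynomial $G_{n,j}$ in the $L_i$ coincides, after $L_i\mapsto e^{\e_i}$, with $Ch_{n,j}=\sum_i(-1)^{i-1}q^{n-2i+1}Ch L_{\l^i_j}$; equivalently $G_{n,j}$ is obtained from the left side of \eqref{0201m01} by $e^{\e_i}\mapsto L_i$. By the definition of $\mathcal{G}_{n,j}$ stated just before the corollary — it is the rational function in $q$ gotten by substituting $e^{\e_i}\mapsto q^{2(\e_i,\L+\rho)}$ into that same left side of \eqref{0201m01} — the composite substitution $L_i\mapsto q^{2(\e_i,\L+\rho)}$ applied to $G_{n,j}$ yields precisely $\mathcal{G}_{n,j}$. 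Substituting this into the formula from the previous step completes the proof.

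There is essentially no obstacle here: the corollary is a bookkeeping consequence of Lemma \ref{CM1}, Lemma \ref{CM2}, the displayed rewriting of $C^0_{n,k}$, the unnumbered character lemma, and the definitions of $\varphi$, $C^0_{n,k}$ and $\mathcal{G}_{n,k}$. The only point requiring a word of care is the legitimacy of "evaluating a central element on the highest weight vector": one must note that $C_{n,k}$ is central, hence acts as a scalar on the irreducible $L_\L$, and that $\varphi(C_{n,k})=C^0_{n,k}\in\Uq^0$ acts on $v_\L$ by the same scalar because $C_{n,k}-C^0_{n,k}$ lies in $\sum_s u_s^{(-)}u_s^{(0)}u_s^{(+)}$ with $u_s^{(+)}v_\L=0$; this is exactly the argument already sketched in the paragraph following \eqref{formula}, so it may be cited rather than repeated. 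One should also record that $\mathcal{G}_{n,l}$ is well defined as a Laurent polynomial in $q$ (not merely a rational function), which follows from Lemma \ref{CM2}, although this is not strictly needed for the identity of scalars.
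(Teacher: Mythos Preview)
Your proposal is correct and is exactly the intended argument: the paper states this corollary without proof, as an immediate consequence of the binomial expansion of $C^0_{n,k}$ in terms of the $G_{n,j}$, the character lemma identifying $G_{n,j}$ with the left side of \eqref{0201m01} under $L_i\leftrightarrow e^{\e_i}$, and the definition of $\mathcal{G}_{n,j}$ as the specialization $e^{\e_i}\mapsto q^{2(\e_i,\L+\rho)}$. Your handling of the sign $(q^{-1}-q)^{-k}=(-1)^k(q-q^{-1})^{-k}$ and of the evaluation $\chi_\L(C_{n,k})=C^0_{n,k}\big|_{L_i\mapsto q^{2(\e_i,\L+\rho)}}$ via the Harish--Chandra map is exactly right.
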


The advantage of this formula is that every term in
$\mathcal{G}_{n,l}$ has a representation theoretical interpretation
in terms of the general linear Lie algebra.

\section{The main result}
With the preparations in the previous sections, we can now prove the
following theorem, which is the main result of the paper.
\begin{theo}\label{MT}
The center $\ZZ$ of $\Uq$ is generated by $c$ and the quantum
Casimir operators  $C_{n,1}$, $\cdots$, $C_{n,n}$.
\end{theo}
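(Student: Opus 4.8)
The plan is to use the quantum Harish-Chandra isomorphism (Lemma \ref{lamma2--1}) to reduce the theorem to a purely commutative-algebra statement about the images $C^0_{n,k}=\varphi(C_{n,k})$ inside $\Uq^0$. By Lemma \ref{lamma2--1}, $\ZZ$ is isomorphic via $\varphi$ to the subalgebra of $\Uq^0$ generated by $\UW$ together with $c$. So it suffices to show that the elements $C^0_{n,1},\dots,C^0_{n,n}$ together with $c$ generate this same subalgebra. Since $\varphi$ is an algebra homomorphism and $c^{\pm2}\in\UW$, what must be proved is that $C^0_{n,1},\dots,C^0_{n,n}$ generate the $W$-invariant algebra $\UW$, i.e. the algebra of symmetric Laurent polynomials (symmetric in the $L_i$ and invariant under the simultaneous shift accounting for $c$), possibly after adjoining $c$.

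The key step is the analysis of $G_{n,k}$ carried out in Section 3. From the identity $C^0_{n,k}=\frac{1}{(q^{-1}-q)^k}\sum_{j=0}^k\binom{k}{j}(-q^{1-n})^j G_{n,j}$ we see that, working modulo the subalgebra generated by $G_{n,0},\dots,G_{n,k-1}$, each $C^0_{n,k}$ is a nonzero scalar multiple of $G_{n,k}$; since $G_{n,0}$ is the scalar in \eqref{100312a01}, the algebra generated by $C^0_{n,1},\dots,C^0_{n,n}$ coincides with the algebra generated by $G_{n,1},\dots,G_{n,n}$. By Lemma \ref{CM1} and the Corollary following \eqref{0201m01}, $G_{n,1}=q^{n-1}(L_1+\cdots+L_n)$ and more generally $G_{n,k}=\sum_{i=1}^n(-1)^{i-1}q^{n-2i+1}\G_{k,i}$, where $\G_{k,i}$ is the symmetric polynomial obtained from $Ch\,L_{\l^i_k}$. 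The crucial observation is that for $k\le n$ the "leading" term $\G_{k,1}$ is the character $Ch\,L_{(k,0,\dots,0)}=h_k(L_1,\dots,L_n)$, the complete homogeneous symmetric polynomial of degree $k$, while the remaining terms $\G_{k,i}$ with $i\ge2$ correspond to Young diagrams of degree $k$ that are not a single row, hence — by the Jacobi–Trudi/Pieri relations — are expressible as polynomials in $h_1,\dots,h_{k-1}$. Therefore, inductively, $G_{n,1},\dots,G_{n,n}$ generate the same algebra as $h_1,\dots,h_n$, which is exactly the full ring of symmetric polynomials in $L_1,\dots,L_n$. Adjoining $c$ and passing to Laurent polynomials (using $c^{\pm1}$ to invert, since $L_1\cdots L_n$ is, up to the central element, a power of $c^{-2}$) then recovers all of $\UW$.

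I would carry this out in the following order: (i) invoke Lemma \ref{lamma2--1} to pass from $\ZZ$ to the algebra generated by $\{C^0_{n,k}\}\cup\{c\}$ in $\Uq^0$; (ii) use the triangular relation between $C^0_{n,k}$ and $G_{n,j}$ to replace the $C^0_{n,k}$ by the $G_{n,k}$; (iii) use the Corollary expressing $G_{n,k}$ via the $\G_{k,i}$ and the fact that $\G_{k,1}=h_k$ while $\G_{k,i}$ ($i\ge2$) lies in $\K[h_1,\dots,h_{k-1}]$, to prove by induction on $k$ that $\K[G_{n,1},\dots,G_{n,n}]=\K[h_1,\dots,h_n]=\K[L_1,\dots,L_n]^W$; (iv) finally account for $c$ and the passage to $W$-invariant Laurent polynomials, concluding that $\langle c, C_{n,1},\dots,C_{n,n}\rangle=\ZZ$.

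The main obstacle is step (iii): one must verify cleanly that the correction terms $\G_{k,i}$ for $i\ge 2$ genuinely lie in the subalgebra generated by the lower complete homogeneous symmetric functions, and — more delicately — that the leading coefficient relating $G_{n,k}$ to $h_k$ (namely $q^{n-1}$) is nonzero and that no unexpected cancellation among the $q$-powers $(-1)^{i-1}q^{n-2i+1}$ collapses the degree-$k$ part of $G_{n,k}$. The case distinction $k<n$ versus $k\ge n$ in the definition of $\l^i_k$ needs care here, since for $k\ge n$ the diagrams are truncated; but since we only need $k\le n$ for the generation statement, the relevant range is exactly the "untruncated" regime $k\le n$, where $\G_{k,1}=h_k$ holds on the nose, which is what makes the induction go through.
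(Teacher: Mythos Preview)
Your overall strategy matches the paper's: reduce via the Harish--Chandra isomorphism, pass from the $C^0_{n,k}$ to the $G_{n,k}$ by the triangular relation, and then show that $G_{n,1},\dots,G_{n,n}$ generate $\K[L_1,\dots,L_n]^W$ by comparing with the complete homogeneous symmetric polynomials $h_k$. Steps (i), (ii), (iv) are fine.

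Step (iii), however, contains a genuine error. You assert that $\G_{k,i}$ for $i\ge 2$ lies in $\K[h_1,\dots,h_{k-1}]$ ``by Jacobi--Trudi/Pieri'' because the corresponding Young diagram is not a single row. This is false precisely for the hook shapes $\lambda^i_k=(k{-}i{+}1,1^{i-1})$ that occur here. In the Jacobi--Trudi determinant $\det(h_{\lambda_r-r+s})_{1\le r,s\le \ell}$ the largest subscript that appears is $\lambda_1+\ell-1$, and for a partition of $k$ with $\ell\ge 2$ parts this equals $k$ \emph{exactly} when the partition is a hook. Expanding along the first row shows that the coefficient of $h_k$ in $\G_{k,i}$ is $(-1)^{i-1}$, not zero; for instance $\G_{k,2}=h_1h_{k-1}-h_k$. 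So the induction, as you have written it, does not close up.

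The repair is simple and is what the paper (tersely) intends. Each $\G_{k,i}$ is homogeneous of degree $k$, hence a $\K$-linear combination of products $h_{\mu_1}h_{\mu_2}\cdots$ with $\sum\mu_j=k$; among these only $h_k$ itself fails to lie in $\K[h_1,\dots,h_{k-1}]$. Summing the contributions just computed, the coefficient of $h_k$ in $G_{n,k}$ (for $1\le k\le n$) is
\[
\sum_{i=1}^{k}(-1)^{i-1}q^{\,n-2i+1}\cdot(-1)^{i-1}
\;=\;\sum_{i=1}^{k}q^{\,n-2i+1},
\]
which is nonzero in $\K=\C(q)$. Thus $G_{n,k}\equiv(\text{nonzero scalar})\cdot h_k$ modulo $\K[h_1,\dots,h_{k-1}]$, and by induction $h_k\in\K[G_{n,1},\dots,G_{n,k}]$. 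Your closing worry about ``unexpected cancellation among the $q$-powers'' is exactly the right issue; it has to be met by this computation rather than by the incorrect claim about the individual $\G_{k,i}$.
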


In order to prove the theorem, we need some basic results on
symmetric polynomials \cite{M}, which we recall here. The complete
homogeneous symmetric polynomial of degree $k$ in $n$ variables
$x_1$, $x_2$, $\cdots$, $x_n$, written $h_k$ for $k=0,1,2,\cdots$,
is the sum of all monomials of total degree $k$ in the variables.
Formally,
\begin{eqnarray*}
h_k(x_1,x_2,\cdots,x_n)=\mbox{$\sum\limits_{1\leqslant i_1\leqslant
i_2\leqslant \cdots\leqslant i_k\leqslant n}$}x_{i_1}x_{i_2}\cdots
x_{i_k}.
\end{eqnarray*}
It is well known that the set of complete homogeneous symmetric
polynomials
\[
h_1(x_1,x_2,\cdots,x_n), h_2(x_1,x_2,\cdots,x_n), \cdots,
h_n(x_1,x_2,\cdots,x_n)
\]
generate the ring of symmetric polynomials in the $n$ variables
$x_1$, $x_2$, $\cdots$, $x_n$.

\begin{proof}[Proof of Theorem \ref{MT}]
Denote $\K[L_1,L_2,\cdots,L_n]^W$ the algebra of symmetric
polynomials in the polynomial ring $\K[L_1,L_2,\cdots,L_n]$. Given
any element in $\UW$, we can always express it in terms of elements
of $\K[L_1,L_2,\cdots,L_n]^W$ and $c$ algebraically. Note that
$c^{-2}\in \K[L_1,L_2,\cdots,L_n]^W$. Therefore, in order to prove
Theorem \ref{MT}, it suffices to show that $G_{n,k}\,(k=1,\cdots,n)$
generate $\K[L_1,L_2,\cdots,L_n]^W$.

Note that $\G_{k, 1}$ in $G_{n,k}$ is a complete symmetric
polynomial in $L_1$, $\dots$, $L_n$. Thus $\G_{1, 1}$, $\G_{2, 1}$,
$\dots$, $\G_{n, 1}$ are a set of generators of the ring
$\K[L_1,L_2,\cdots,L_n]^W$ of symmetric polynomial.

Now $\G_{1, 1}$ is equal to $q^{1-n} G_{n,1}$, and we can easily
express $\G_{2, 1}$ in terms of $G_{n,1}$ and $G_{n,2}$. Inductively
we can show that $\G_{k, 1}$ can always be expressed in terms of
$G_{n,1}$, $G_{n,2}$, $\cdots$, $G_{n,k}$. Thus $G_{n,1}$,
$G_{n,2}$, $\cdots$, $G_{n,n}$ are also a set of generators of the
symmetric polynomial ring $\K[L_1,L_2,\cdots,L_n]^W$. Since the
elements of $\K[L_1,L_2,\cdots,L_n]^W$ and $c$ together generate
$\varphi(\ZZ)$, we complete the proof of the theorem.
\end{proof}

\vs{8pt}

{\bf Acknowledgements}\ \ This work is supported by the Australian
Research Council. Financial assistance from the National Science
Foundation of China (Grant No. 10926166) is also acknowledged.

\end{document}